\numberwithin{equation}{section}
\numberwithin{figure}{section}
\theoremstyle{plain}
\newtheorem{thm}{\protect\theoremname}
\theoremstyle{plain}
\newtheorem{lem}[thm]{\protect\lemmaname}
\theoremstyle{plain}
\newtheorem{cor}[thm]{\protect\corollaryname}
\numberwithin{thm}{section}
\providecommand{\corollaryname}{Corollary}
\providecommand{\lemmaname}{Lemma}
\providecommand{\theoremname}{Theorem}
\begin{document}
\title{On the convergence of Denjoy-Wolff points}
\author{Serban Belinschi, Hari Bercovici, and Ching-Wei Ho}
\address{Institut de Math\'ematiques de Toulouse: UMR5219, Université de Toulouse,
CNRS; UPS , F-31062 Toulouse, France}
\email{Serban.Belinschi@math.univ-toulouse.fr}
\address{Mathematics Department, Indiana University, Bloomington, IN 47405,
USA}
\email{bercovic@indiana.edu}
\address{Institute of Mathematics, Academia Sinica, Taipei 10617, Taiwan; Department
of Mathematics, University of Notre Dame, Notre Dame, IN 46556, United
States}
\email{cho2@nd.edu}
\subjclass[2000]{Primary: 30D05. Secondary: 46L54}
\begin{abstract}
If $\varphi$ is an analytic function from the unit disk $\mathbb{D}$
to itself, and $\varphi$ is not a conformal automorphism, we denote
by $\lambda_{\varphi}$ its Denjoy-Wolff point, that is, the limit
of the iterates $\varphi(\varphi(\cdots\varphi(0)\cdots))$. A result
of Heins shows that, given a sequence $(\varphi_{n})_{n\in\mathbb{N}}$
of such analytic functions that convergence pointwise to $\varphi$,
it follows that $\lim_{n\to\infty}\lambda_{\varphi_{n}}=\lambda_{\varphi}$.
This allows us to improve results about the contnuous extensions of
the subordination functions that arise in the study of free convolutions.
We also offer an alternate proof of the result of Heins.
\end{abstract}

\maketitle

\section{Introduction\label{sec:Introduction}}

Denote by $\mathbb{D}$ the unit disk in the complex plane $\mathbb{C},$
and let $\mathbb{T}$ be its boundary. Suppose that $\varphi:\mathbb{D}\to\overline{\mathbb{D}}$
is an analytic map. Denjoy and Wolff discovered independently \cite{denjoy,wolff}
that, if $\varphi$ does not have a fixed point in $\mathbb{D}$,
then it must have an attracting fixed point in $\mathbb{T}$. More
precisely, in this case, there exists a unique point $\lambda_{\varphi}\in\mathbb{T}$
with the property that
\[
\lim_{r\uparrow1}\varphi(r\lambda_{\varphi})=\lambda_{\varphi},
\]
and a certain nonnegative quantity, denoted $\varphi'(\lambda_{\varphi})$
and called the Julia-Carath\'eodory derivative of $\varphi$ at $\lambda_{\varphi}$,
exists and satisfies
\[
\varphi'(\lambda_{\varphi})\le1.
\]
 It follows that for an analytic map $\varphi:\mathbb{D}\to\overline{\mathbb{D}}$
one, and only one, of the following alternatives occurs.
\begin{enumerate}
\item $\varphi$ is the identity map $\varphi(z)=z$.
\item $\varphi(z)=\gamma$ for every $z\in\mathbb{D}$, where $\gamma\in\mathbb{T}$.
In this case, we set $\lambda_{\varphi}=\gamma$ and this point can
be thought of as a generalized fixed point for $\varphi$.
\item $\varphi$ is not the identity map but it has a fixed point $\lambda_{\varphi}\in\mathbb{D}$
such that $|\varphi'(\lambda_{\varphi})|=1$. In this case $\varphi$
is a (hyperbolic) rotation about the point $\lambda_{\varphi}$ and
it has no other fixed points in $\mathbb{D}$.
\item $\varphi$ has a fixed point $\lambda_{\varphi}\in\mathbb{D}$ and
$|\varphi'(\lambda_{\varphi})|<1$. In this case, $\lambda_{\varphi}$
is the limit of the iterates $\varphi(\varphi(\cdots(\varphi(z))\cdots))$
for every $z\in\mathbb{D}$.
\item $\varphi(\mathbb{D})\subseteq\mathbb{D}$ and $\varphi$ has no fixed
point in $\mathbb{D}$. In this case, the Denjoy-Wolff result provides
the point $\lambda_{\varphi}\in\mathbb{T}$ which is, again, the limit
of the iterates $\varphi(\varphi(\cdots(\varphi(z))\cdots))$ for
every $z\in\mathbb{D}$. The function $\varphi$ may have other fixed
points on $\mathbb{T}$, but the Julia-Carath\'eodory derivative
at such points will be greater than one.
\end{enumerate}
Thus, the point $\lambda_{\varphi}$ is defined for every analytic
selfmap of $\mathbb{D}$ other than the identity map. For our purposes,
Denjoy-Wolff points on $\mathbb{T}$ are best understood in the context
of the conformally equivalent domain
\[
\mathbb{H}=\{x+iy\in\mathbb{C}:y>0\}.
\]
This is mapped to $\mathbb{D}$ via the function $z\mapsto(z-i)/(z+i)$,
$z\in\mathbb{H}$. This identification extends to a homeomorphism
of the closure
\[
\overline{\mathbb{H}}=\mathbb{H}\cup\mathbb{R}\cup\{\infty\}
\]
in the Riemann sphere to
\[
\overline{\mathbb{D}}=\mathbb{D}\cup\mathbb{T}
\]
that sends $\infty$ to $1$. Thus, any point in $\mathbb{R}\cup\{\infty\}$
can be the Denjoy-Wolff point of some analytic map $\psi:\mathbb{H}\to\mathbb{H}.$
According to Nevanlinna \cite{nev}, an arbitrary such map $\psi$
can be written under the form
\[
\psi(z)=\beta+\int_{\mathbb{R}\cup\{\infty\}}\frac{1+tz}{t-z}\,d\sigma(t),\quad z\in\mathbb{H},
\]
where $\beta\in\mathbb{R}$ and $\sigma$ is a finite Borel measure
on the one-point compactification of $\mathbb{R}$. The fraction above
must, of course, be understood to be $z$ when $t=\infty$, so the
Nevanlinna formula can also be written as
\[
\psi(z)=\alpha z+\beta+\int_{\mathbb{R}}\frac{1+tz}{t-z}\,d\sigma(t),\quad z\in\mathbb{H},
\]
where $\alpha=\sigma(\{\infty\})$. Clearly, $\psi$ is the identity
map precisely when $\alpha=1$, $\beta=0$, and $\sigma(\mathbb{R})=0$.
We have $\lambda_{\psi}=\infty$ precisely when $\alpha\ge1$ and
$\psi$ is not the identity map. (The number $1/\alpha$ corresponds
to the Julia-Carath\'eodory derivative of $\psi$ at $\infty$.) 

The follwong result is proved in \cite{heins}. 
\begin{thm}
\label{thm:convergence theorem} Suppose that $(\varphi_{n})_{n\in\mathbb{N}}$
is a sequence of analytic self-maps of $\mathbb{D}$ that converges
pointwise to $\varphi:\mathbb{D\to\mathbb{D}},$ and suppose that
$\varphi$ is not the identity map. Then $\lim_{n\to\infty}\lambda_{\varphi_{n}}=\lambda_{\varphi}$.
\end{thm}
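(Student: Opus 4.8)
The plan is to reduce the statement to a claim about subsequential limits and then to exploit the Julia--Wolff--Carath\'eodory circle of inequalities. Since every $\varphi_n$ maps $\mathbb{D}$ into itself, the family $(\varphi_n)$ is uniformly bounded, so Vitali's theorem upgrades the hypothesis of pointwise convergence to locally uniform convergence of $\varphi_n$ to $\varphi$ on $\mathbb{D}$. Because $\overline{\mathbb{D}}$ is compact it suffices to show that every convergent subsequence $(\lambda_{\varphi_{n_k}})$, say with limit $\mu\in\overline{\mathbb{D}}$, satisfies $\mu=\lambda_\varphi$; full convergence then follows. Throughout I would use the elementary consequence of the Schwarz--Pick lemma that a self-map of $\mathbb{D}$ other than the identity has at most one fixed point in $\mathbb{D}$, and that when such a fixed point exists it is exactly the Denjoy--Wolff point, i.e.\ alternatives (3) and (4).

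I would then split according to the location of $\lambda_\varphi$. Suppose first $\lambda_\varphi=p\in\mathbb{D}$, so $p$ is the unique zero in $\mathbb{D}$ of $\varphi(z)-z$. Since $\varphi\neq\mathrm{id}$ we have $\varphi_n\neq\mathrm{id}$ for large $n$, and $\varphi_n(z)-z\to\varphi(z)-z$ locally uniformly with $\varphi(z)-z\not\equiv 0$. By Hurwitz's theorem, for every small $\varepsilon>0$ the function $\varphi_n(z)-z$ eventually has a zero in $\{|z-p|<\varepsilon\}$; this zero is an interior fixed point of $\varphi_n$, hence, being unique, equals $\lambda_{\varphi_n}$. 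Letting $\varepsilon\downarrow 0$ gives $\lambda_{\varphi_n}\to p=\lambda_\varphi$ directly, with no need for subsequences.

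The substantive case is $\lambda_\varphi\in\mathbb{T}$, covering alternatives (2) and (5). For $\eta\in\mathbb{T}$ write $d_\eta(z)=|\eta-z|^2/(1-|z|^2)$, the horocyclic Julia quotient. Passing to a subsequence I may assume either $\lambda_{\varphi_{n_k}}\in\mathbb{T}$ for all $k$, or $\lambda_{\varphi_{n_k}}=p_k\in\mathbb{D}$ for all $k$. In the first situation Wolff's lemma gives $d_{\lambda_{\varphi_{n_k}}}(\varphi_{n_k}(z))\le d_{\lambda_{\varphi_{n_k}}}(z)$ for every $z$. In the second, $p_k$ is a fixed point of $\varphi_{n_k}$, and rewriting Schwarz--Pick, $\rho(\varphi_{n_k}(z),p_k)\le\rho(z,p_k)$, through the identity $1-\rho(z,w)^2=(1-|z|^2)(1-|w|^2)/|1-\overline{w}z|^2$ yields the form $|1-\overline{p_k}\varphi_{n_k}(z)|^2/(1-|\varphi_{n_k}(z)|^2)\le|1-\overline{p_k}z|^2/(1-|z|^2)$. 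The crucial observation, and the technical heart of the argument, is that as the base point $p_k\to\mu\in\mathbb{T}$ this Schwarz--Pick inequality degenerates precisely into Wolff's horocyclic inequality, because $|1-\overline{\mu}w|=|\mu-w|$ when $|\mu|=1$.

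Finally I would let $k\to\infty$ in these inequalities with $z\in\mathbb{D}$ fixed. First one must exclude $\mu\in\mathbb{D}$: in alternative (5), taking $z=\mu$ (interior), or using $\varphi_{n_k}(p_k)=p_k\to\varphi(\mu)$, forces $\varphi(\mu)=\mu$, contradicting the absence of an interior fixed point; in alternative (2) it contradicts $\varphi\equiv\gamma\in\mathbb{T}$. Hence $\mu\in\mathbb{T}$. When $\varphi(z)\in\mathbb{D}$ the limit of the left-hand sides is $d_\mu(\varphi(z))$, and we obtain $d_\mu(\varphi(z))\le d_\mu(z)$ for all $z$, i.e.\ $\varphi$ maps every horodisk at $\mu$ into itself; iterating $\varphi$ from $0$ and using that the iterates converge to $\lambda_\varphi$ while staying in the closed horodisk, whose only boundary point is $\mu$, forces $\mu=\lambda_\varphi$. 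The remaining case, alternative (2) with $\varphi\equiv\gamma$, is handled by the same inequalities via a blow-up: since $\varphi_{n_k}(z)\to\gamma\in\mathbb{T}$, the denominator $1-|\varphi_{n_k}(z)|^2\to 0$ while the numerator tends to $|\mu-\gamma|^2$, so finiteness of the right-hand side forces $\mu=\gamma=\lambda_\varphi$. The main obstacle is thus not any single estimate but organizing the boundary case so that the interior- and boundary-fixed-point inequalities for $\varphi_n$ are both driven, in the limit, to the single horocyclic inequality at $\mu$, together with the degenerate limit in which $\varphi$ is constant.
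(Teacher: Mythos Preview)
Your argument is correct, but it follows a genuinely different route from the paper's proof. The paper splits according to the location of the subsequential limit $\mu$: when $\mu\in\mathbb{D}$ it uses the same fixed-point/uniform-convergence observation you make, and when $\mu\in\mathbb{T}$ it rotates so that $\mu=1$, transfers to $\mathbb{H}$, and uses the Nevanlinna integral representation $\psi_n(z)=\beta_n+\int(1+tz)/(t-z)\,d\sigma_n(t)$; taking imaginary parts in the fixed-point equation $\psi_n(iy_n)=iy_n$ and letting $y_n\to\infty$ forces $\sigma(\{\infty\})\ge1$, hence $\lambda_\psi=\infty$. By contrast you split on the location of $\lambda_\varphi$, settle the interior case directly with Hurwitz, and in the boundary case stay entirely in $\mathbb{D}$, passing to the limit in the Schwarz--Pick/Wolff horocycle inequalities to obtain $d_\mu(\varphi(z))\le d_\mu(z)$ and then invoking the characterization of the Denjoy--Wolff point via horodisk invariance. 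Your approach is more elementary in that it avoids integral representations altogether and uses only the Schwarz--Pick machinery intrinsic to the Denjoy--Wolff theorem; the paper's approach, on the other hand, is written in the Nevanlinna language that drives the free-probability applications in Section~\ref{sec:applications}. One small remark: since the theorem assumes $\varphi:\mathbb{D}\to\mathbb{D}$, alternative~(2) (a constant in $\mathbb{T}$) cannot occur for $\varphi$, so your separate treatment of that case is unnecessary, though harmless.
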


It is, of course, obvious that $\varphi_{n}$ is not the identity
map for large $n$, and then $\lambda_{\varphi_{n}}$ is indeed defined
eventually. Note also that the functions $\varphi_{n}$ converge to
$\varphi$ locally uniformly by the Vitali-Montel theorem (see, for
instance \cite{key-13}). An alternate proof of Theorem \ref{thm:convergence theorem}
is provided in Section \ref{sec:Proof-of-the convergence thm}. In
Section \ref{sec:applications}, we take advantage of the fact that
one can use another space of parameters in place of $\mathbb{N}$
to derive applications to the subordination functions that occur in
free probability. Many of these applications were known, though the
original proofs are more involved and rely on the functional equations
that subordination functions satisfy (see \cite{voic-fish1,serb-leb,serb-boundedness,bb-semigroups}).

We wish to thank Marco Abate for bringing the work of Heins \cite{heins},
as well as the existence of subsequent developments, to our attention.
Theorem \ref{thm:convergence theorem} was not previously known to
us and we included it as an original result. Our proof is perhaps
more elementary than the original and we decided to retain it in this
note.

\section{Proof of the convergence theorem\label{sec:Proof-of-the convergence thm}}

Throughout this section, we assume that $\varphi_{n}$ and $\varphi$
are given maps $\mathbb{D}\to\mathbb{D}$ that satisfy the hypotheses
of Theorem \ref{thm:convergence theorem}. Since $\overline{\mathbb{D}}$
is a compact metric space, the sequence $(\lambda_{\varphi_{n}})_{n\in\mathbb{N}}$
converges if and only if it has a unique subsequential limit. Therefore,
for the proof of Theorem \ref{thm:convergence theorem} we may assume
that the limit $\mu=\lim_{n\to\infty}\lambda_{\varphi_{n}}$ exists,
and we must prove that $\mu=\lambda_{\varphi}.$ We distinguish two
cases, according to whether $\mu\in\mathbb{D}$ or $\mu\in\mathbb{T}$. 
\begin{lem}
\label{lem:mu in disk}If $\mu\in\mathbb{D}$ then $\mu=\lambda_{\varphi}$.
\end{lem}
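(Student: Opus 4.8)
The plan is to exploit the fact that a Denjoy-Wolff point lying in $\mathbb{D}$ is automatically a fixed point of the map, and then to pass to the limit. First I would observe that, since $\mathbb{D}$ is open and $\lambda_{\varphi_{n}}\to\mu\in\mathbb{D}$, we have $\lambda_{\varphi_{n}}\in\mathbb{D}$ for all sufficiently large $n$. According to the classification in the introduction, whenever the Denjoy-Wolff point of a self-map lies in $\mathbb{D}$ (alternatives (3) and (4)), it is a genuine fixed point; hence $\varphi_{n}(\lambda_{\varphi_{n}})=\lambda_{\varphi_{n}}$ for all large $n$.

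Next I would let $n\to\infty$ in this fixed-point identity. The right-hand side tends to $\mu$ by hypothesis. For the left-hand side I would use that, by the Vitali-Montel theorem, $\varphi_{n}\to\varphi$ uniformly on compact subsets of $\mathbb{D}$. Fixing a closed disk $K\subset\mathbb{D}$ centered at $\mu$, which contains $\lambda_{\varphi_{n}}$ for large $n$, I would split
\[
|\varphi_{n}(\lambda_{\varphi_{n}})-\varphi(\mu)|\le\sup_{z\in K}|\varphi_{n}(z)-\varphi(z)|+|\varphi(\lambda_{\varphi_{n}})-\varphi(\mu)|;
\]
the first term vanishes in the limit by uniform convergence on $K$ and the second by continuity of $\varphi$ at $\mu$. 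Thus $\varphi_{n}(\lambda_{\varphi_{n}})\to\varphi(\mu)$, and comparing the two limits yields $\varphi(\mu)=\mu$. So $\mu$ is a fixed point of $\varphi$ in $\mathbb{D}$.

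Finally I would invoke uniqueness of interior fixed points for non-identity self-maps. Because $\varphi$ is not the identity, the Schwarz lemma (in its equality case, after conjugating the fixed point to the origin) shows that $\varphi$ can have at most one fixed point in $\mathbb{D}$. Having an interior fixed point places $\varphi$ in alternative (3) or (4), for which $\lambda_{\varphi}$ is itself that unique interior fixed point. Therefore $\mu=\lambda_{\varphi}$, as desired.

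The step I expect to require the most care is the passage to the limit in $\varphi_{n}(\lambda_{\varphi_{n}})$: the evaluation point $\lambda_{\varphi_{n}}$ is moving, so pointwise convergence of $\varphi_{n}$ alone is not enough, and one genuinely needs local uniform convergence together with the continuity of the limit $\varphi$. Everything else is a direct application of the Denjoy-Wolff classification recalled in the introduction and of the Schwarz lemma.
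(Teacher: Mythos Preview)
Your argument is correct and follows essentially the same route as the paper: use local uniform convergence near $\mu$ to pass to the limit in $\varphi_{n}(\lambda_{\varphi_{n}})=\lambda_{\varphi_{n}}$, obtain $\varphi(\mu)=\mu$, and conclude via uniqueness of the interior fixed point. You spell out more explicitly why $\lambda_{\varphi_{n}}$ is a genuine fixed point and how to handle the moving evaluation point, but the strategy is identical.
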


\begin{proof}
The functions $\varphi_{n}$ converge to $\varphi$ uniformly in some
neighborhood of $\mu$. Since $\lambda_{\varphi_{n}}$ belongs eventually
to that neighborhood, this implies that
\[
\mu=\lim_{n\to\infty}\lambda_{\varphi_{n}}=\lim_{n\to\infty}\varphi_{n}(\lambda_{\varphi_{n}})=\varphi(\mu).
\]
Thus $\mu$ is the (necessarily) unique fixed point of $\varphi$,
that is, $\lambda_{\varphi}=\mu$.
\end{proof}
\begin{lem}
\label{mu in the circle}If $\mu\in\mathbb{T}$ then $\mu=\lambda_{\varphi}$.
\end{lem}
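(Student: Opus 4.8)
The plan is to pass to the limit in the Julia--Wolff inequality after rewriting it in a form that does not degenerate. For $w\in\mathbb{D}$ and $\zeta\in\overline{\mathbb{D}}$ set
\[
P(w,\zeta)=\frac{1-|w|^{2}}{|1-\overline{\zeta}w|^{2}},
\]
which is the Poisson kernel at $\zeta$ when $\zeta\in\mathbb{T}$. The key observation is that for every $n$ one has the single inequality
\[
P(\varphi_{n}(z),\lambda_{\varphi_{n}})\ge P(z,\lambda_{\varphi_{n}}),\qquad z\in\mathbb{D},
\]
irrespective of whether $\lambda_{\varphi_{n}}$ lies in $\mathbb{D}$ or on $\mathbb{T}$. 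Indeed, if $\lambda_{\varphi_{n}}\in\mathbb{D}$ it is a fixed point of $\varphi_{n}$, and the inequality is the Schwarz--Pick inequality $\rho(\varphi_{n}(z),\lambda_{\varphi_{n}})\le\rho(z,\lambda_{\varphi_{n}})$ rewritten by means of the identity $1-\rho(a,b)^{2}=(1-|a|^{2})(1-|b|^{2})/|1-\overline{b}a|^{2}$; if $\lambda_{\varphi_{n}}\in\mathbb{T}$ it is the Denjoy--Wolff point and the inequality is Julia's lemma, using that the Julia--Carath\'eodory derivative there is at most $1$.

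The merit of this formulation is that the kernel $P$ is jointly continuous on $\mathbb{D}\times\overline{\mathbb{D}}$: for $w$ in a compact subset of $\mathbb{D}$ the denominator satisfies $|1-\overline{\zeta}w|\ge 1-|w|>0$ uniformly in $\zeta\in\overline{\mathbb{D}}$. Since $\varphi_{n}(z)\to\varphi(z)\in\mathbb{D}$ and $\lambda_{\varphi_{n}}\to\mu\in\mathbb{T}$, I would let $n\to\infty$ in the displayed inequality to obtain
\[
P(\varphi(z),\mu)\ge P(z,\mu),\qquad z\in\mathbb{D},
\]
equivalently $H_{\mu}(\varphi(z))\le H_{\mu}(z)$, where $H_{\mu}(w)=|\mu-w|^{2}/(1-|w|^{2})$; that is, $\varphi$ maps every horodisk tangent to $\mathbb{T}$ at $\mu$ into itself.

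It remains to deduce $\mu=\lambda_{\varphi}$ from this horocyclic inequality. First I would rule out that $\varphi$ has a fixed point $p\in\mathbb{D}$: in that case the function $h(z)=P(\varphi(z),\mu)-P(z,\mu)$ is harmonic and nonnegative on $\mathbb{D}$ and vanishes at $p$, hence $h\equiv0$ by the minimum principle. Writing $h\equiv0$ as the equality of the real parts of $(\mu+\varphi(z))/(\mu-\varphi(z))$ and $(\mu+z)/(\mu-z)$ forces these to differ by an imaginary constant, so $\varphi$ is a conformal automorphism preserving every horocycle at $\mu$; such an automorphism is either the identity, excluded by hypothesis, or parabolic with fixed point $\mu$, which has no fixed point in $\mathbb{D}$, and either way this contradicts $\varphi(p)=p$. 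Thus $\varphi$ has no fixed point in $\mathbb{D}$, so $\lambda_{\varphi}\in\mathbb{T}$ and the iterates $\varphi^{k}(z)$ converge to $\lambda_{\varphi}$. Iterating $H_{\mu}\circ\varphi\le H_{\mu}$ gives $H_{\mu}(\varphi^{k}(z))\le H_{\mu}(z)$, so the whole orbit stays in the closed horodisk $\{H_{\mu}\le H_{\mu}(z)\}$, whose closure meets $\mathbb{T}$ only at $\mu$; letting $k\to\infty$ yields $\lambda_{\varphi}=\mu$.

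I expect the main obstacle to be exactly the first step. Taking the limit in the Schwarz--Pick inequality directly is useless, since as the interior fixed point $\lambda_{\varphi_{n}}$ approaches $\mathbb{T}$ both sides tend to the constant $1$; the content of the argument is the recognition that reformulating the inequality through the kernel $P$---which simultaneously subsumes the boundary case via Julia's lemma---produces a statement that survives the passage to the limit. Once the horocyclic inequality for $\mu$ is in hand, the remaining dichotomy is settled by the minimum principle and the elementary geometry of horodisks.
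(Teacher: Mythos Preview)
Your argument is correct and follows a route genuinely different from the paper's. The paper normalizes so that $\mu=1$, conjugates to the upper half-plane, writes the Nevanlinna representations of $\psi_n$ and $\psi$, and uses weak-$*$ convergence of the representing measures to show that $\sigma(\{\infty\})\ge1$, i.e.\ that $\lambda_\psi=\infty$. Your proof stays in the disk: you observe that both the interior case (Schwarz--Pick) and the boundary case (Julia's lemma with derivative $\le1$) are subsumed by the single inequality $P(\varphi_n(z),\lambda_{\varphi_n})\ge P(z,\lambda_{\varphi_n})$, which---unlike the pseudohyperbolic formulation---survives the limit because $P$ is jointly continuous on $\mathbb{D}\times\overline{\mathbb{D}}$. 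The remaining steps (minimum principle to exclude an interior fixed point, then horodisk geometry to pin down the boundary Denjoy--Wolff point) are standard and carried out cleanly. Your approach is arguably more elementary, needing neither integral representations nor weak-$*$ compactness; the paper's approach, on the other hand, is phrased in the Nevanlinna language that is reused throughout the applications in the next section, so it dovetails with those arguments. Both are short; yours isolates the key insight (the Poisson-kernel reformulation) particularly crisply.
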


\begin{proof}
We write
\[
\lambda_{\varphi_{n}}=r_{n}\mu_{n},\quad,n\in\mathbb{N},
\]
where $\mu_{n}\in\mathbb{T}$ converge to $\mu$, $r_{n}\in[0,1]$,
and $\lim_{n\to\infty}r_{n}=1$. Consider the new maps $\widetilde{\varphi}_{n},\widetilde{\varphi}:\mathbb{D}\to\mathbb{D}$
defined by
\[
\widetilde{\varphi}_{n}(\lambda)=\varphi_{n}(\mu_{n}\lambda)/\mu_{n},\quad\widetilde{\varphi}(\lambda)=\varphi(\mu\lambda)/\mu,\quad n\in\mathbb{N},\lambda\in\mathbb{D}.
\]
 We have then
\[
\lambda_{\widetilde{\varphi}}=\lambda_{\varphi}/\mu,\quad\lambda_{\widetilde{\varphi}_{n}}=r_{n},\quad n\in\mathbb{N},
\]
and the sequence $(\widetilde{\varphi}_{n})_{n\in\mathbb{N}}$ converges
pointwise to $\widetilde{\varphi}$. To conclude the proof, it suffices
to show that $\lambda_{\widetilde{\varphi}}=1$. At this point, we
use the conformal map
\[
u(z)=\frac{z-i}{z+i},\quad z\in\mathbb{H},
\]
to reformulate the problem. We define analytic maps $\psi_{n},\psi:\mathbb{H}\to\mathbb{H}$
by
\[
\psi_{n}(z)=u^{-1}(\widetilde{\varphi}_{n}(u(z))),\psi(z)=u^{-1}(\widetilde{\varphi}(u(z)))\quad z\in\mathbb{H}.
\]
The sequence $(\psi_{n})_{n\in\mathbb{N}}$ converges pointwise to
$\psi$. Since $\{u(iy):y>0\}=(-1,1)$, it follows that $\lambda_{\psi_{n}}$
is either $\infty$ or of the form $iy_{n}$ for some $y_{n}>0$,
and moreover $\lim_{n\to\infty}y_{n}=\infty$. We conclude the proof
by showing that $\lambda_{\psi}=\infty$. To do this, we write the
Nevanlinna representations
\begin{align*}
\psi_{n}(z) & =\beta_{n}+\int_{\mathbb{R}\cup\{\infty\}}\frac{1+tz}{t-z}\,d\sigma_{n}(t)=\alpha_{n}z+\beta_{n}+\int_{\mathbb{R}}\frac{1+tz}{t-z}\,d\sigma_{n}(t),\\
\psi(z) & =\beta+\int_{\mathbb{R}\cup\{\infty\}}\frac{1+tz}{t-z}\,d\sigma(t)=\alpha z+\beta+\int_{\mathbb{R}}\frac{1+tz}{t-z}\,d\sigma(t),\quad z\in\mathbb{H}.
\end{align*}
The convergence of $\psi_{n}$ to $\psi$ amounts to $\lim_{n\to\infty}\beta_{n}=\beta$
and to the convergence of $\sigma_{n}$ to $\sigma$ in the weak$^{*}$-topology
(obtained by viewing these measures as linear functionals on the Banach
space $C(\mathbb{R}\cup\{\infty\})$). Clearly, if $\lambda_{\psi_{n}}=\infty$
for infinitely many values of $n$, then we have $\alpha_{n}=\sigma_{n}(\{\infty\})\ge1$
for infinitely many values of $n$, and weak convergence implies $\alpha\ge1$
as well. We can therefore restrict ourselves to the case in which
$\lambda_{\psi_{n}}=iy_{n}$ with $y_{n}\in(0,+\infty).$ Thus, $iy_{n}$
is an actual fixed point of $\psi_{n}$, that is
\[
iy_{n}=\beta_{n}+\int_{\mathbb{R}\cup\{\infty\}}\frac{1+tiy_{n}}{t-iy_{n}}d\sigma_{n}(t).
\]
Taking imaginary parts in this equation (and dividing by $y_{n}$)
yields
\[
1=\int_{\mathbb{R}\cup\{\infty\}}\frac{1+t^{2}}{t^{2}+y_{n}^{2}}\,d\sigma_{n}(t).
\]
Suppose that $y_{n}\ge1$. For fixed $T>0$, we have
\begin{align*}
1 & =\int_{[-T,T]}\frac{1+t^{2}}{t^{2}+y_{n}^{2}}\,d\sigma_{n}(t)+\int_{[\mathbb{R}\cup\{\infty\}]\backslash[-T,T]}\frac{1+t^{2}}{t^{2}+y_{n}^{2}}\,d\sigma_{n}(t)\\
 & \le\int_{[-T,T]}\frac{1+t^{2}}{t^{2}+y_{n}^{2}}\,d\sigma_{n}(t)+\sigma_{n}([\mathbb{R}\cup\{\infty\}]\backslash[-T,T]).
\end{align*}
Since $y_{n}\to\infty$, the last integral above tends to zero as
$n\to\infty$, and thus
\[
\liminf_{n\to\infty}\sigma_{n}([\mathbb{R}\cup\{\infty\}]\backslash[-T,T])\ge1
\]
for every $T>0$. We conclude (using, for instance, the portmanteau
theorem) that $\alpha=\sigma(\{\infty\})\ge1$, thus reaching the
desired conclusion that $\lambda_{\psi}=\infty$.
\end{proof}

\section{\label{sec:applications} Applications}

All the results in this section arise from free probability considerations.
These connections are well-known and we refer to \cite{voic-fish1,voic-coalg,biane,bb-semigroups}
for their closer examination. The approach using Denjoy-Wolff points
was described earlier \cite{bb-new-approach}. The new features here
are continuity at the boundary and the two-variable results.
\begin{cor}
Suppose that $\varphi:\mathbb{D}\to\mathbb{D}$ is an analytic function.
There exists a unique continuous function $\omega:\overline{\mathbb{D}}\to\overline{\mathbb{D}}$
such that $\omega$ is analytic in $\mathbb{D}$, $\omega(0)=0$,
and
\[
\omega(z)=z\varphi(\omega(z)),\quad z\in\mathbb{D}.
\]
\end{cor}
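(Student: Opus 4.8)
The plan is to realize $\omega(z)$, for each fixed $z$, as the Denjoy--Wolff point of the auxiliary self-map $\Phi_{z}(w)=z\varphi(w)$ of $\mathbb{D}$, and to read off both the interior analyticity and the continuity up to $\mathbb{T}$ from the behavior of these points as $z$ varies. For $z\in\mathbb{D}$ the range $\Phi_{z}(\mathbb{D})$ sits in the relatively compact disk $|z|\overline{\mathbb{D}}$, so $\Phi_{z}$ falls under alternative $(4)$ of the introduction: it has a unique fixed point $\omega(z)\in\mathbb{D}$, which is attracting and equals $\lambda_{\Phi_{z}}$. This defines $\omega$ on $\mathbb{D}$ with values in $\mathbb{D}$, and since $\Phi_{0}\equiv0$ one gets $\omega(0)=0$ at once. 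Uniqueness of the interior fixed point also settles the uniqueness half of the statement: any competing solution is analytic from $\mathbb{D}$ into $\overline{\mathbb{D}}$ with $\omega(0)=0$, hence maps $\mathbb{D}$ into $\mathbb{D}$ by the maximum principle, and therefore must be a fixed point of each $\Phi_{z}$, forcing it to coincide with our $\omega$ on $\mathbb{D}$ and then, by continuity, on $\overline{\mathbb{D}}$.

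For analyticity on $\mathbb{D}$ I would apply the implicit function theorem to $F(z,w)=w-z\varphi(w)$. The only point to check is that $\partial_{w}F=1-z\varphi'(w)$ does not vanish at $(z,\omega(z))$; but $z\varphi'(\omega(z))=\Phi_{z}'(\omega(z))$ is the multiplier of $\Phi_{z}$ at its fixed point, and since $\Phi_{z}$ is not a conformal automorphism (its range misses a neighborhood of $\mathbb{T}$) the Schwarz--Pick inequality forces $|\Phi_{z}'(\omega(z))|<1$. Hence $\partial_{w}F\neq0$, the local solutions patch together by uniqueness of the fixed point, and $\omega$ is analytic on all of $\mathbb{D}$.

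The heart of the argument, and the step where Theorem \ref{thm:convergence theorem} enters, is the continuous extension to $\mathbb{T}$. For $z\in\mathbb{T}$ the map $\Phi_{z}$ is still an analytic self-map of $\mathbb{D}$ (as $|z\varphi(w)|=|\varphi(w)|<1$), and I would define $\omega(z):=\lambda_{\Phi_{z}}\in\overline{\mathbb{D}}$. Continuity of $\omega$ on all of $\overline{\mathbb{D}}$ then follows by taking an arbitrary sequence $z_{n}\to z_{0}$ in $\overline{\mathbb{D}}$: since $\Phi_{z_{n}}(w)=z_{n}\varphi(w)\to z_{0}\varphi(w)=\Phi_{z_{0}}(w)$ pointwise, Theorem \ref{thm:convergence theorem} yields $\omega(z_{n})=\lambda_{\Phi_{z_{n}}}\to\lambda_{\Phi_{z_{0}}}=\omega(z_{0})$, which is exactly the desired convergence (for interior $z_{n}$ the point $\lambda_{\Phi_{z_{n}}}$ is the fixed point $\omega(z_{n})$ constructed above, so the identity $\omega(z_{n})=\lambda_{\Phi_{z_{n}}}$ holds uniformly in the sequence). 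I expect the main obstacle to be the hypothesis of Theorem \ref{thm:convergence theorem} that the limit $\Phi_{z_{0}}$ not be the identity. This can fail only if $z_{0}\varphi(w)\equiv w$, that is, only if $\varphi$ is the rotation $w\mapsto\overline{z_{0}}\,w$; in that single degenerate situation one checks directly that $\omega\equiv0$ solves the equation and is trivially continuous, so the theorem need not be invoked. In every other case $\Phi_{z_{0}}$ is not the identity, the convergence theorem applies, and the construction delivers the required continuous $\omega$.
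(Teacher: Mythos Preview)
Your argument is correct and follows the same route as the paper: define $\omega(z)=\lambda_{\Phi_{z}}$ for the family $\Phi_{z}(w)=z\varphi(w)$, invoke Theorem~\ref{thm:convergence theorem} for continuity on $\overline{\mathbb{D}}$, and dispose of the single exceptional case $\varphi(w)=\overline{z_{0}}\,w$ by direct inspection. The only cosmetic difference is that the paper deduces analyticity on $\mathbb{D}$ from the locally uniform convergence of the iterates $\Phi_{z}^{\circ n}(0)$ rather than from the implicit function theorem.
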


\begin{proof}
For every $z\in\mathbb{\overline{D}}$ we define a map $\varphi_{z}:\mathbb{D}\to\mathbb{D}$
by
\[
\varphi_{z}(\lambda)=z\varphi(\lambda).
\]
Clearly, the map $z\mapsto\varphi_{z}(\lambda)$ is continuous for
fixed $\lambda$, and therefore Theorem \ref{thm:convergence theorem}
shows that the function $\omega(z)=\lambda_{\varphi_{z}}$ is continuous,
unless $\varphi_{z}$ is the identity function for some $z$. This
last situation can only occur when $z\in\mathbb{T}$ and $\varphi(\lambda)=\lambda/z$,
$z\in\mathbb{D}$. The corollary is proved by direct computation in
this special case. In all other cases, analyticity in $\mathbb{D}$
follows because $\omega(z)$ is the limit of the iterates $\varphi_{z}(\varphi_{z}(\cdots(\varphi_{z}(0))\cdots))$.
Uniqueness follows because $\varphi_{z}$ is not the identity map
for any $z\in\overline{\mathbb{D}}$.
\end{proof}
The preceding result, proved differently, is instrumental in the arguments
of \cite{bb-semigroups}, showing that the free (multiplicative) convolution
powers of a probability measure on the unit circle have certain regularity
properties. More precisely, such measures are absolutely continuous
relative to arclength measure, with the exception of a finite number
of atoms, and their densities are locally analytic where positive.
\begin{cor}
\label{cor:two variables-circle}Suppose that $\varphi_{1},\varphi_{2}:\mathbb{D}\to\mathbb{D}$
are analytic functions. Then there exist unique continuous functions
$\omega_{1},\omega_{2}:\overline{\mathbb{D}}\times\overline{\mathbb{D}}\to\overline{\mathbb{D}}$
that are analytic on $\mathbb{D}\times\mathbb{D}$, $\omega_{1}(0,0)=\omega_{2}(0,0)=0$,
and
\begin{align}
\omega_{1}(z_{1},z_{2}) & =z_{2}\varphi_{2}(\omega_{2}(z_{1},z_{2})),\label{eq:omega1}\\
\omega_{2}(z_{1},z_{2}) & =z_{1}\varphi_{1}(\omega_{1}(z_{1},z_{2})),\quad z_{1},z_{2}\in\mathbb{D}.\nonumber 
\end{align}
\end{cor}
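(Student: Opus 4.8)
The plan is to reduce the coupled system to the single-variable situation already treated in the preceding corollary. For $(z_1,z_2)\in\overline{\mathbb{D}}\times\overline{\mathbb{D}}$, substituting the second equation of \eqref{eq:omega1} into the first (and conversely) shows that $\omega_1$ and $\omega_2$ must be fixed points of the analytic self-maps of $\mathbb{D}$
\[
\Phi_{z_1,z_2}(\lambda)=z_2\varphi_2(z_1\varphi_1(\lambda)),\qquad \Psi_{z_1,z_2}(\lambda)=z_1\varphi_1(z_2\varphi_2(\lambda)).
\]
Each of these does map $\mathbb{D}$ into $\mathbb{D}$, because $|\varphi_j(\lambda)|<1$ and $|z_j|\le 1$. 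I would therefore define $\omega_1(z_1,z_2)=\lambda_{\Phi_{z_1,z_2}}$ and $\omega_2(z_1,z_2)=\lambda_{\Psi_{z_1,z_2}}$, exactly as in the one-variable case, and then verify that this pair satisfies \eqref{eq:omega1}.

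The first substantive step is the verification of the functional equations on $\mathbb{D}\times\mathbb{D}$. When $|z_1|,|z_2|<1$ the image $\Phi_{z_1,z_2}(\mathbb{D})$ is relatively compact in $\mathbb{D}$, so $\Phi_{z_1,z_2}$ falls under alternative (4) of the introduction and has a unique fixed point $\omega_1\in\mathbb{D}$, equal to the limit of its iterates from $0$; likewise for $\Psi_{z_1,z_2}$. To see that \eqref{eq:omega1} holds, set $a=z_1\varphi_1(\omega_1)\in\mathbb{D}$. Then $\omega_1=\Phi_{z_1,z_2}(\omega_1)=z_2\varphi_2(a)$, whence $\Psi_{z_1,z_2}(a)=z_1\varphi_1(z_2\varphi_2(a))=z_1\varphi_1(\omega_1)=a$; since $a$ is then an interior fixed point of $\Psi_{z_1,z_2}$ and such a point is unique, $a=\omega_2$. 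This yields simultaneously $\omega_2=z_1\varphi_1(\omega_1)$ and $\omega_1=z_2\varphi_2(\omega_2)$. The normalization $\omega_1(0,0)=\omega_2(0,0)=0$ is immediate because $\Phi_{0,0}$ and $\Psi_{0,0}$ are identically $0$.

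Continuity and analyticity would then follow as in the one-variable case, now using that Theorem \ref{thm:convergence theorem} applies with $\overline{\mathbb{D}}\times\overline{\mathbb{D}}$ in place of $\mathbb{N}$ as the parameter space. Indeed $(z_1,z_2)\mapsto\Phi_{z_1,z_2}(\lambda)$ is continuous for each fixed $\lambda$, so along any convergent sequence of parameters the maps $\Phi_{z_1,z_2}$ converge pointwise; Theorem \ref{thm:convergence theorem} then gives continuity of $\omega_1=\lambda_{\Phi_{z_1,z_2}}$, and symmetrically of $\omega_2$. Analyticity on $\mathbb{D}\times\mathbb{D}$ follows because there $\omega_1$ is the locally uniform limit of the iterates $\Phi_{z_1,z_2}^{\circ k}(0)$, each of which is jointly analytic in $(z_1,z_2)$. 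Uniqueness is clear on $\mathbb{D}\times\mathbb{D}$, since any admissible pair must consist of the unique interior fixed points of $\Phi_{z_1,z_2}$ and $\Psi_{z_1,z_2}$, and it then extends to $\overline{\mathbb{D}}\times\overline{\mathbb{D}}$ by continuity.

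The main obstacle, precisely as in the preceding corollary, is the exceptional case in which $\Phi_{z_1,z_2}$ or $\Psi_{z_1,z_2}$ degenerates to the identity map, since Theorem \ref{thm:convergence theorem} is then inapplicable. A composition of analytic self-maps of $\mathbb{D}$ equals the identity only when every factor is a conformal automorphism, which forces $|z_1|=|z_2|=1$ together with $\varphi_1$ and $\varphi_2$ both being automorphisms. Hence, as long as at least one of $\varphi_1,\varphi_2$ is not an automorphism, neither $\Phi_{z_1,z_2}$ nor $\Psi_{z_1,z_2}$ is ever the identity on $\overline{\mathbb{D}}\times\overline{\mathbb{D}}$ and the argument above is complete; the remaining case, in which both are automorphisms, is confined to $\mathbb{T}\times\mathbb{T}$ and is disposed of by direct computation.
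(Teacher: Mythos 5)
Your proof is correct and takes essentially the same route as the paper: define the subordination functions as Denjoy--Wolff points of the composed family $z_{1}\varphi_{1}(z_{2}\varphi_{2}(\cdot))$, apply Theorem \ref{thm:convergence theorem} with $\overline{\mathbb{D}}\times\overline{\mathbb{D}}$ as the parameter space for continuity, get analyticity from the iterates, and treat the exceptional case where the composition is the identity (forcing $(z_{1},z_{2})\in\mathbb{T}\times\mathbb{T}$ and both $\varphi_{j}$ automorphisms) by direct computation. The only cosmetic difference is that the paper defines just $\omega_{2}$ as a Denjoy--Wolff point and recovers $\omega_{1}$ from the functional equation, whereas you define both as fixed points and verify the coupling via uniqueness.
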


\begin{proof}
Define $\omega_{2}(z_{1},z_{2})=\lambda_{\varphi_{z_{1},z_{2}}}$
and use (\ref{eq:omega1}) to define $\omega_{1}$, where $\varphi_{z_{1},z_{2}}(\lambda)=z_{1}\varphi_{1}(z_{2}\varphi_{2}(\lambda))$
for $\lambda\in\mathbb{D}$ and $z_{1},z_{2}\in\overline{\mathbb{D}}$.
One must worry again about the possibility that $\varphi_{z_{1},z_{2}}$
is the identity map for some $z_{1},z_{2}\in\overline{\mathbb{D}}.$
This can happen for only one pair $(z_{1},z_{2})\in\mathbb{T}^{2}$
and for functions $\varphi_{1},\varphi_{2}$ that are automorphisms
of $\mathbb{D}$. This case is again treated by direct computation;
see for instance \cite{serb-thesis}.
\end{proof}
For applications to free probability, a special case of the preceding
result is useful. For the proof, we just observe that the functions
$z\mapsto\omega_{j}(z,z)$ satisfy the requirements. Uniqueness of
these functions is an easy consequence of the uniqueness of Denjoy-Wolff
points.
\begin{cor}
Suppose that $\varphi_{1},\varphi_{2}:\mathbb{D}\to\mathbb{D}$ are
analytic functions. Then there exist unique continuous functions $\omega_{1},\omega_{2}:\overline{\mathbb{D}}\to\overline{\mathbb{D}}$
that are analytic on $\mathbb{D}$, $\omega_{1}(0)=\omega_{2}(0)=0$,
and
\[
\omega_{1}(z)\varphi_{1}(\omega_{1}(z))=\omega_{2}(z)\varphi_{2}(\omega_{2}(z))=\frac{\omega_{1}(z)\omega_{2}(z)}{z},\quad z\in\mathbb{D}\backslash\{0\}.
\]
\end{cor}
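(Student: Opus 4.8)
The plan is to produce $\omega_1,\omega_2$ by restricting to the diagonal the two-variable functions furnished by Corollary \ref{cor:two variables-circle}. Writing $\Omega_1,\Omega_2$ for those functions, I set $\omega_j(z):=\Omega_j(z,z)$. Each $\omega_j$ then inherits continuity on $\overline{\mathbb{D}}$, analyticity on $\mathbb{D}$, and the normalization $\omega_j(0)=\Omega_j(0,0)=0$. Putting $z_1=z_2=z$ in (\ref{eq:omega1}) yields the diagonal system
\[
\omega_1(z)=z\varphi_2(\omega_2(z)),\qquad \omega_2(z)=z\varphi_1(\omega_1(z)),\quad z\in\mathbb{D}.
\]
Multiplying the first relation by $\omega_2(z)$ and the second by $\omega_1(z)$ shows that both products $\omega_1(z)\varphi_1(\omega_1(z))$ and $\omega_2(z)\varphi_2(\omega_2(z))$ equal $\omega_1(z)\omega_2(z)/z$, which is exactly the asserted identity.

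For uniqueness, suppose $\widetilde{\omega}_1,\widetilde{\omega}_2$ satisfy all the stated requirements. Since each $\widetilde{\omega}_j$ is analytic on $\mathbb{D}$, bounded by $1$ in modulus, and vanishes at the origin, the maximum modulus principle forces $\widetilde{\omega}_j(\mathbb{D})\subseteq\mathbb{D}$. Off the isolated zeros of $\widetilde{\omega}_1$ and $\widetilde{\omega}_2$, I can cancel a factor of $\widetilde{\omega}_1(z)$, respectively $\widetilde{\omega}_2(z)$, in the stated identity to recover $\widetilde{\omega}_2(z)=z\varphi_1(\widetilde{\omega}_1(z))$ and $\widetilde{\omega}_1(z)=z\varphi_2(\widetilde{\omega}_2(z))$; by the identity theorem these analytic relations then hold on all of $\mathbb{D}$, and by continuity on $\overline{\mathbb{D}}$. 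Substituting the second into the first shows that, for each fixed $z\in\mathbb{D}$, the value $\widetilde{\omega}_2(z)$ is a fixed point in $\mathbb{D}$ of the map $\varphi_{z,z}(\lambda)=z\varphi_1(z\varphi_2(\lambda))$ appearing in the proof of Corollary \ref{cor:two variables-circle}. But for $z\in\mathbb{D}$ this map carries $\mathbb{D}$ into a relatively compact subset of itself, and hence has a unique fixed point, namely its Denjoy-Wolff point $\lambda_{\varphi_{z,z}}=\Omega_2(z,z)=\omega_2(z)$. Therefore $\widetilde{\omega}_2=\omega_2$ on $\mathbb{D}$, hence on $\overline{\mathbb{D}}$, and then $\widetilde{\omega}_1(z)=z\varphi_2(\widetilde{\omega}_2(z))=\omega_1(z)$.

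The existence half is a routine computation once the two-variable result is available. The one place calling for care is the passage, in the uniqueness argument, from the single stated identity back to the two functional equations: this requires dividing by $\widetilde{\omega}_1(z)$ and $\widetilde{\omega}_2(z)$ and so must account for their zeros. I expect this to be the main (and only minor) obstacle, handled by noting that the zeros are isolated and invoking the identity theorem, with the degenerate situations in which some $\widetilde{\omega}_j$ vanishes identically checked separately by direct substitution.
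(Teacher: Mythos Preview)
Your existence argument is exactly the paper's: restrict the two-variable subordination functions of Corollary~\ref{cor:two variables-circle} to the diagonal and read off the desired identity. The paper's own uniqueness argument is the single sentence ``Uniqueness of these functions is an easy consequence of the uniqueness of Denjoy--Wolff points,'' and your argument is a fleshed-out version of the same idea: recover the coupled system $\widetilde\omega_2(z)=z\varphi_1(\widetilde\omega_1(z))$, $\widetilde\omega_1(z)=z\varphi_2(\widetilde\omega_2(z))$ from the stated identity by cancellation, and then identify $\widetilde\omega_2(z)$ as the unique fixed point of $\varphi_{z,z}$ in $\mathbb{D}$.

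There is, however, a genuine gap in the handling of the degenerate case (and the paper's one-line proof shares it). The pair $\widetilde\omega_1\equiv 0$, $\widetilde\omega_2\equiv 0$ always satisfies every condition in the statement: all three expressions in the displayed identity reduce to $0$. Yet the Denjoy--Wolff solution is generally nonzero; for example, with $\varphi_1=\varphi_2\equiv\tfrac12$ one gets $\omega_1(z)=\omega_2(z)=z/2$. Thus uniqueness, as literally stated, fails, and your remark that the degenerate cases ``can be checked separately by direct substitution'' does not rescue it---direct substitution merely confirms that the trivial pair is a second solution. The cancellation step, which is where you pass from the single identity back to the coupled system, is precisely what breaks down when $\widetilde\omega_j\equiv 0$. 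What is actually true, and what both you and the paper are implicitly proving, is uniqueness among pairs satisfying the coupled system itself; from there your fixed-point argument is correct.
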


The existence of functions $\omega_{1},\omega_{2}$ defined in $\mathbb{D}$
and satisfying the conditions of the preceding corollary was first
proved in \cite{biane}. The new information here is that these functions
extend continuously to $\mathbb{T}$ (but see \cite{serb-thesis}
for the case in which the functions $\varphi_{j}$ continue analytically
through some arc and the continuations map that arc to $\mathbb{T}$.)
As before, this result can be used to study the regularity of free
multiplicative convolutions of Borel probability measures on $\mathbb{T}$;
see \cite{serb-thesis} for measures on $\mathbb{T}$ whose support
is not the entire circle.

For the counterparts of these results in the case of additive free
convolution, we recall the notation $\overline{\mathbb{H}}=\mathbb{H}\cup\mathbb{R}\cup\{\infty\}$
for the closure of the complex upper half-plane in the Riemann sphere.
\begin{cor}
Suppose that $\psi_{1},\psi_{2}:\mathbb{H}\to\mathbb{H}$ are two
analytic functions such that 
\[
\lim_{y\uparrow\infty}\frac{\psi_{j}(iy)}{iy}=0,\quad j=1,2.
\]
 Then there exist continuous functions 
\[
\omega_{1},\omega_{2}:(\mathbb{H}\cup\mathbb{R})\times(\mathbb{H}\cup\mathbb{R})\to\overline{\mathbb{H}}
\]
 that are finite and analytic on $\mathbb{H}\times\mathbb{H},$ and
\begin{align*}
\omega_{1}(z_{1},z_{2}) & =z_{2}+\psi_{2}(\omega_{2}(z_{1},z_{2})),\\
\omega_{2}(z_{1},z_{2}) & =z_{1}+\psi_{1}(\omega_{1}(z_{1},z_{2})),\quad z_{1},z_{2}\in\mathbb{H}.
\end{align*}
 
\end{cor}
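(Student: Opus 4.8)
The plan is to realize $\omega_2$ as a Denjoy--Wolff point and then invoke Theorem~\ref{thm:convergence theorem}, exactly as in the multiplicative corollaries. For $z_1,z_2\in\mathbb{H}\cup\mathbb{R}$ define the self-map of $\mathbb{H}$
\[
\Psi_{z_1,z_2}(\lambda)=z_1+\psi_1\bigl(z_2+\psi_2(\lambda)\bigr),\qquad\lambda\in\mathbb{H};
\]
this is a self-map because $\psi_2(\lambda)\in\mathbb{H}$, hence $z_2+\psi_2(\lambda)\in\mathbb{H}$, hence $\psi_1(z_2+\psi_2(\lambda))\in\mathbb{H}$, and adding $z_1$ leaves the imaginary part positive. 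Transporting the notion of Denjoy--Wolff point to $\mathbb{H}$ through the conformal map $u$, I would set $\omega_2(z_1,z_2)=\lambda_{\Psi_{z_1,z_2}}$ and $\omega_1(z_1,z_2)=z_2+\psi_2(\omega_2(z_1,z_2))$. A genuine fixed point of $\Psi_{z_1,z_2}$ is precisely a solution of the coupled system, since $\omega_2=\Psi_{z_1,z_2}(\omega_2)$ unwinds to $\omega_2=z_1+\psi_1(\omega_1)$ together with $\omega_1=z_2+\psi_2(\omega_2)$.

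The heart of the argument is to show that, for $z_1,z_2\in\mathbb{H}$, the point $\omega_2(z_1,z_2)=\lambda_{\Psi_{z_1,z_2}}$ actually lies in $\mathbb{H}$, so that it is an honest fixed point and the system holds with finite values. Two boundary possibilities must be excluded. A real Denjoy--Wolff point is impossible: if $\lambda_{\Psi}\in\mathbb{R}$ then $\Psi_{z_1,z_2}$ would have nontangential limit in $\mathbb{R}$ at that point, forcing $\operatorname{Im}\Psi_{z_1,z_2}\to0$, whereas $\operatorname{Im}\Psi_{z_1,z_2}(\lambda)\ge\operatorname{Im}z_1>0$ for every $\lambda$. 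The point $\infty$ must also be excluded, and by the Nevanlinna description recalled in the introduction this amounts to showing that $\alpha_{\Psi}=\lim_{y\to\infty}\Psi_{z_1,z_2}(iy)/(iy)$ satisfies $\alpha_{\Psi}<1$; since adding the constant $z_1$ does not change this coefficient, it suffices to prove $\lim_{y\to\infty}\operatorname{Im}\psi_1\bigl(z_2+\psi_2(iy)\bigr)/y=0$.

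Establishing this last limit is the step I expect to be the main obstacle. Writing $w_y=z_2+\psi_2(iy)$, the hypothesis $\lim_{y\to\infty}\psi_j(iy)/(iy)=0$ gives $w_y/(iy)\to0$, while $\operatorname{Im}w_y\ge\operatorname{Im}z_2>0$. If $w_y$ stays bounded, or tends to $\infty$ within a Stolz angle, then the Nevanlinna representation of $\psi_1$ yields $\psi_1(w_y)/w_y\to0$ by dominated convergence, and one concludes at once. The delicate case is a tangential escape of $w_y$ to $\infty$, where $\operatorname{Im}w_y$ is negligible compared with $\lvert w_y\rvert$; here one must combine the fact that the strictly positive constant $\operatorname{Im}z_2$ keeps $w_y$ uniformly away from $\mathbb{R}$ with the Poisson-type representation of $\operatorname{Im}\psi_1$ to show $\operatorname{Im}\psi_1(w_y)=o(y)$. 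This is exactly the mechanism encoded in the free-probability arguments of \cite{biane,bb-semigroups}, and it is where the assumption $\alpha_j=0$ is genuinely needed. (When $\sigma_1$ has finite second moment the bound is immediate, since then $\operatorname{Im}\psi_1(w_y)$ stays bounded.)

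Granting that $\omega_2\in\mathbb{H}$, the remaining assertions follow the template of the multiplicative corollaries. For continuity I would take $(z_1^{(n)},z_2^{(n)})\to(z_1,z_2)$ in $(\mathbb{H}\cup\mathbb{R})^2$; then $\Psi_{z_1^{(n)},z_2^{(n)}}\to\Psi_{z_1,z_2}$ pointwise, so Theorem~\ref{thm:convergence theorem}, applied to the conjugated self-maps $u\circ\Psi_{z_1,z_2}\circ u^{-1}$ of $\mathbb{D}$, gives $\omega_2(z_1^{(n)},z_2^{(n)})\to\omega_2(z_1,z_2)$ in $\overline{\mathbb{H}}$, whence continuity of $\omega_1$ from the first equation. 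Here the assumption $\alpha_j=0$ rules out $\psi_1,\psi_2$ being affine, so $\Psi_{z_1,z_2}$ is never the identity for $z_1,z_2\in\mathbb{H}$; the only troublesome parameters are isolated boundary pairs, treated by direct computation as in \cite{serb-thesis}. On the boundary the Denjoy--Wolff point may land in $\mathbb{R}\cup\{\infty\}$, which is why the target is $\overline{\mathbb{H}}$ and finiteness is claimed only on $\mathbb{H}\times\mathbb{H}$. Finally, analyticity on $\mathbb{H}\times\mathbb{H}$ is automatic: $\omega_2(z_1,z_2)$ is the locally uniform limit of the iterates $\Psi_{z_1,z_2}^{\circ n}(i)$, each jointly analytic in $(z_1,z_2)$, so the limit is analytic by the Vitali--Montel theorem, and then so is $\omega_1=z_2+\psi_2(\omega_2)$.
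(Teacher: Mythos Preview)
Your approach is essentially identical to the paper's: define $\Psi_{z_1,z_2}(\lambda)=z_1+\psi_1(z_2+\psi_2(\lambda))$ (the paper writes $\varphi_{z_1,z_2}$), set $\omega_2=\lambda_{\Psi_{z_1,z_2}}$ and $\omega_1=z_2+\psi_2(\omega_2)$, invoke Theorem~\ref{thm:convergence theorem} for continuity, handle the identity-map exception for boundary pairs of fractional-linear $\psi_j$ by direct computation, and obtain analyticity on $\mathbb{H}\times\mathbb{H}$ from the iterates. The only difference is emphasis: you devote most of your discussion to excluding $\infty$ as the Denjoy--Wolff point for interior parameters (worrying about tangential escape of $w_y$), whereas the paper disposes of this in a single sentence, asserting that the asymptotic hypothesis ensures $\omega_j(i,i)\neq\infty$.
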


The proof is almost identical with that of Corollary \ref{cor:two variables-circle},
using the family of maps 
\[
\varphi_{z_{1},z_{2}}(\lambda)=z_{1}+\psi_{1}(z_{2}+\psi_{2}(\lambda)),\quad\lambda\in\mathbb{H},z_{1},z_{2}\in\mathbb{H}\cup\mathbb{R}.
\]
If none of these maps is the identity on $\mathbb{H}$, Theorem \ref{thm:convergence theorem}
applies. If one of these maps is the identity, which can only occur
for one pair $(z_{1},z_{2})\in\mathbb{R}^{2}$ and only for fractional
linear maps $\psi_{j}$, the corollary is verified by a direct calculation
that can be found essentially in \cite{serb-thesis}. The asymptotic
condition on the functions $\psi_{j}$ ensures that $\omega_{j}$
is not equal to $\infty$ at some point in $\mathbb{H}\times\mathbb{H}$
(for instance $(i,i)$). The existence of the analytic functions $\omega_{j}(z,z)$,
$z\in\mathbb{H}$, was known earlier \cite{voic-fish1,biane,voic-coalg}. 

The maps $\omega_{j}$ can often be extended continuously to pairs
$(z_{1},z_{2})\in\overline{\mathbb{H}}\times\overline{\mathbb{H}}$
for which one or both coordinates are infinite. For instance, for
every $z_{2}\in\mathbb{\overline{H}}$ we have
\[
\lim_{z_{1}\to\infty}\varphi_{z_{1},z_{2}}(\lambda)=\infty,\quad\lambda\in\mathbb{H},
\]
and thus 
\[
\lim_{z_{1}\to\infty}\omega_{1}(z_{1},z_{2})=\lim_{z_{1}\to\infty}\lambda_{\varphi_{z_{1},z_{2}}}=\infty.
\]
If the limit
\[
\psi_{1}(\infty)=\lim_{\lambda\to\infty,\lambda\in\mathbb{H}}\psi_{1}(\lambda)
\]
 exists and is not zero, we have
\[
\lim_{z_{2}\to\infty}\varphi_{z_{1},z_{2}}(\lambda)=z_{1}+\psi_{1}(\infty),\quad\lambda\in\mathbb{H},
\]
so
\[
\lim_{z_{2}\to\infty}\omega_{1}(z_{1},z_{2})=z_{1}+\psi_{1}(\infty),\quad z_{1}\in\overline{\mathbb{H}}.
\]
Similar considerations apply to $\omega_{2}$. 

Setting $z_{1}=z_{2}$ in the above result yields the following result,
first proved in \cite{serb-leb,serb-boundedness}. 
\begin{cor}
\label{cor:halfplane one var}Suppose that $\psi_{1},\psi_{2}:\mathbb{H}\to\mathbb{H}$
are two analytic functions such that 
\[
\lim_{y\uparrow\infty}\frac{\psi_{j}(iy)}{iy}=0,\quad j=1,2.
\]
 Then there exist continuous functions $\omega_{1},\omega_{2}:\mathbb{H}\cup\mathbb{R}\to\mathbb{H}\cup\mathbb{R}$
that are finite and analytic on $\mathbb{H}$, and
\[
\omega_{1}(z)+\psi_{1}(\omega_{1}(z))=\omega_{2}(z)+\psi_{2}(\omega_{2}(z))=\omega_{1}(z)+\omega_{2}(z)-z,\quad z\in\mathbb{H}.
\]
\end{cor}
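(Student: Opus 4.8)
The plan is to obtain $\omega_1,\omega_2$ as the restriction to the diagonal of the two functions produced by the immediately preceding corollary. Denoting those two-variable functions by $\Omega_1,\Omega_2:(\mathbb{H}\cup\mathbb{R})^2\to\overline{\mathbb{H}}$, I would set
\[
\omega_j(z):=\Omega_j(z,z),\qquad z\in\mathbb{H}\cup\mathbb{R},\ j=1,2.
\]
Since the diagonal embedding $z\mapsto(z,z)$ is continuous on $\mathbb{H}\cup\mathbb{R}$ and analytic on $\mathbb{H}$, while $\Omega_1,\Omega_2$ are continuous on $(\mathbb{H}\cup\mathbb{R})^2$ and analytic on $\mathbb{H}\times\mathbb{H}$, the functions $\omega_1,\omega_2$ are automatically continuous on $\mathbb{H}\cup\mathbb{R}$ and analytic on $\mathbb{H}$; their finiteness on $\mathbb{H}$ is inherited from the finiteness of $\Omega_1,\Omega_2$ on $\mathbb{H}\times\mathbb{H}$.

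The functional identities then drop out by putting $z_1=z_2=z$ in the defining system
\[
\Omega_1(z_1,z_2)=z_2+\psi_2(\Omega_2(z_1,z_2)),\qquad \Omega_2(z_1,z_2)=z_1+\psi_1(\Omega_1(z_1,z_2)).
\]
The first relation becomes $\omega_1(z)=z+\psi_2(\omega_2(z))$, whence $\psi_2(\omega_2(z))=\omega_1(z)-z$ and therefore $\omega_2(z)+\psi_2(\omega_2(z))=\omega_1(z)+\omega_2(z)-z$; symmetrically the second relation becomes $\omega_2(z)=z+\psi_1(\omega_1(z))$, giving $\omega_1(z)+\psi_1(\omega_1(z))=\omega_1(z)+\omega_2(z)-z$. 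Both expressions thus coincide with the common value $\omega_1(z)+\omega_2(z)-z$, which is exactly the asserted chain of equalities for $z\in\mathbb{H}$.

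The crux of the matter—and the step I expect to demand the real work—is the finiteness claim, namely that $\omega_1,\omega_2$ take values in $\mathbb{H}\cup\mathbb{R}$ rather than merely in $\overline{\mathbb{H}}$, including at real $z$. On $\mathbb{H}$ this is already supplied above. For $z\in\mathbb{R}$ one must exclude the value $\infty$: since $\omega_2(z)=\lambda_{\varphi_{z,z}}$ with $\varphi_{z,z}(\lambda)=z+\psi_1(z+\psi_2(\lambda))$ still an analytic self-map of $\mathbb{H}$, by the criterion recorded in the introduction it suffices to show that the Nevanlinna coefficient $\alpha(\varphi_{z,z})=\lim_{y\uparrow\infty}\varphi_{z,z}(iy)/(iy)$ is strictly less than $1$ (the identity case occurs for at most one real $z$ and is handled directly), after which $\omega_1(z)=z+\psi_2(\omega_2(z))$ is finite as well. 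Writing $w_y=z+\psi_2(iy)$, the hypothesis $\lim_{y}\psi_j(iy)/(iy)=0$ gives $\operatorname{Im}(w_y)/y\to0$, and one is reduced to estimating $\alpha(\varphi_{z,z})=\lim_y\bigl[\operatorname{Im}(\psi_1(w_y))/\operatorname{Im}(w_y)\bigr]\cdot\bigl[\operatorname{Im}(w_y)/y\bigr]$. Here lies the genuine difficulty: the first factor is a non-tangential datum for $\psi_1$, whereas $w_y$ may tend to $\infty$ tangentially, and if $\psi_2(iy)$ approaches a real point at which $\psi_1$ has a boundary singularity the two factors can balance so that the limit is positive—indeed $\ge1$—so that $\omega_2(z)=\infty$. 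I would therefore not expect finiteness on all of $\mathbb{R}$ to follow from the asymptotic hypothesis alone; it is precisely at this point that the quantitative boundedness estimates of \cite{serb-leb,serb-boundedness}, which exploit the finer structure of the maps $\psi_j$ beyond the vanishing of their leading coefficients, must be invoked to force $\alpha(\varphi_{z,z})<1$ and place the value in $\mathbb{H}\cup\mathbb{R}$.
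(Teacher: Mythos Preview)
Your core approach is exactly the paper's: the paper simply says ``Setting $z_{1}=z_{2}$ in the above result yields the following result,'' i.e., restrict the two-variable functions $\Omega_j$ of the preceding corollary to the diagonal and read off the functional equations as you do. No further argument is given there.

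Your concern about boundary finiteness is more careful than the paper itself, and it is legitimate. The paper states the codomain as $\mathbb{H}\cup\mathbb{R}$ but does not justify excluding $\infty$, and in fact under the stated hypotheses alone this need not hold. For instance, take $\psi_1(\lambda)=-1/\lambda$ and $\psi_2(\lambda)=-1/\lambda+c$ with $c\in\mathbb{R}\setminus\{0\}$; both map $\mathbb{H}$ to $\mathbb{H}$ and satisfy $\psi_j(iy)/(iy)\to 0$. At $z=-c$ one computes $\varphi_{-c,-c}(\lambda)=-c+\lambda$, a nontrivial real translation with Nevanlinna coefficient $\alpha=1$, so $\lambda_{\varphi_{-c,-c}}=\infty$ and hence $\omega_2(-c)=\infty$. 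Thus the codomain in the corollary is a slight overstatement; it should read $\overline{\mathbb{H}}$, matching the two-variable result.

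Where your proposal goes astray is in the proposed remedy: invoking the estimates of \cite{serb-leb,serb-boundedness} cannot rescue the general statement, since those results concern the specific $\psi_j$ arising from (reciprocal) Cauchy transforms of probability measures, not arbitrary analytic self-maps of $\mathbb{H}$ with vanishing leading coefficient. The paper cites those references as the original source of the result in the free-probability setting, not as an ingredient in the present proof. So your diagonal-restriction argument is complete for what the paper actually proves; the residual ``crux'' you identify is an imprecision in the codomain of the stated corollary rather than a missing step in the argument.
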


Of course, we also have
\[
\varangle\lim_{z\to\infty}\omega_{j}(z)=\infty,
\]
where $\varangle$ indicates a nontangential limit. This can be deduced
from the fact that
\[
\varangle\lim_{z\to\infty}\varphi_{z,z}(\lambda)=\infty,\lambda\in\mathbb{H},
\]
or, more simply, from the inequality $\Im\omega_{j}(z)\ge\Im z$.
Corollary \ref{cor:halfplane one var} is useful in the proof \cite{serb-leb,serb-boundedness}
that the free additive convolution $\mu_{1}\boxplus\mu_{2}$ of two
Borel probability measures on $\mathbb{R}$, both different from unit
point masses, is absolutely continuous relative to Lebesgue mesure,
except for finitely many atoms, and that the density is locally analytic
where positive. (The corresponding result for convolution powers is
in \cite{bb-semigroups}.) 

There is one more operation on measures to which our main result applies,
namely the free multiplicative convolution of Borel probability measures
on $[0,+\infty)$. The regularity of such free convolutions was examined
in \cite{bb-semigroups} and Theorem \ref{thm:convergence theorem}
provides an easier approach. The application of Theorem \ref{thm:convergence theorem}
involves the simply connected domain
\[
\Omega=\mathbb{C}\backslash[0,+\infty)
\]
 and only part of its prime end compactification. We recall that each
point $r\in(0,+\infty)$ corresponds to two distinct prime ends of
$\Omega$, denoted $r_{+}$ and $r_{-}$ which can be identified within
this compactification as
\[
r_{\pm}=\lim_{\theta\downarrow0}re^{\pm i\theta}.
\]
We consider the class $\mathcal{F}$ consisting of those analytic
functions $\eta:\Omega\to\mathbb{C}$ that satisfy the folllowing
conditions:
\begin{enumerate}
\item $\eta((-\infty,0))\subset(-\infty,0)$. In particular, $\overline{\eta(\overline{\lambda})}=\eta(\lambda)$
for every $\lambda\in\Omega$.
\item If $\lambda\in\mathbb{H}$, we have $\eta(\lambda)\in\mathbb{H}$
and $\arg(\eta(\lambda))\ge\arg\lambda,$ where $\arg\lambda$ represents
the principal value of the argument, that is, $\arg\lambda\in(0,\pi)$.
\item $\lim_{x\uparrow0}\eta(x)=0$.
\end{enumerate}
(The reader conversant with the notation of free probability may observe
that $\mathcal{F}$ consists of the eta-transforms of Borel probability
measures on $[0,+\infty)$. Point masses correspond to functions of
the form $\eta(\lambda)=\beta\lambda$ for some constant $\beta\ge0$.) 

It was shown in \cite{biane} (see also \cite{bb-new-approach}) that,
given two functions $\eta_{1},\eta_{2}\in\mathcal{F}$, not of the
form $\beta\lambda$, there exists a unique analytic functions $\omega\in\mathcal{F}$
such that, setting $f_{j}(\lambda)=\eta_{j}(\lambda)/\lambda$,
\begin{equation}
\omega(\lambda)=\lambda f_{1}(\lambda f_{2}(\omega(\lambda))),\quad\lambda\in\Omega.\label{eq:2}
\end{equation}
Corollary \ref{cor: positive line} provides a continuous extension
of $\omega$ to the closure of $\mathbb{H}$ in the Riemann sphere.
For the proof, we need a form of the Nevanlinna representation that
applies to the functions $f_{j}$ (see \cite[Lemma 2.2]{super convergence}):
there are finite, nonzero, positive Borel measures $\sigma_{1},\sigma_{2}$
on $(0,+\infty)$ and nonnegative constants $\beta_{1},\beta_{2}$
such that $\int_{(0,+\infty)}(1/t)\,d\sigma_{j}(t)<+\infty$, and
\begin{equation}
f_{j}(\lambda)=\frac{\eta_{j}(\lambda)}{\lambda}=\beta_{j}+\int_{(0,+\infty)}\frac{1+t^{2}}{t(t-\lambda)}\,d\sigma_{j}(t),\quad\lambda\in\Omega,\label{eq:Nevanlinna in Omega-1}
\end{equation}
for $j=1,2$. The continuous extension of the function $\omega$ to
$\mathbb{H}\cup[0,+\infty]$ is achieved by considering the family
of maps $(\varphi_{z})_{z\in\mathbb{H}\cup(0,+\infty)}$ defined by
\[
\varphi_{z}(\lambda)=zf_{1}(zf_{2}(\lambda)),\quad\lambda\in\mathbb{H}\cup(-\infty,0),z\in\mathbb{H}\cup(0,+\infty).
\]
 Clearly, the map $z\mapsto\varphi_{z}(\lambda)$ is continuous for
every $\lambda\in\mathbb{H}.$ We discuss the limit of these map as
$z\to0$ and $z\to\infty$ in order to also define maps $\varphi_{0}=0$
and $\varphi_{\infty}=\gamma/f_{2}$, where $\gamma=\lim_{x\downarrow-\infty}\eta_{2}(x)$.
When the limit $\gamma$ is infinite, we have $\varphi\equiv\infty$.
\begin{lem}
With the notation above, we have $\lim_{z\to0}\varphi_{z}(\lambda)=0$
for every $\lambda\in\mathbb{H}$. Moreover, there exists $\gamma\in\overline{\mathbb{H}}$
such that $\lim_{z\to\infty}\varphi_{z}(\lambda)=\gamma/f_{2}(\lambda)$
for every $\lambda\in\mathbb{H}$.
\end{lem}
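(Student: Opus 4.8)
The plan is to reduce both limits to the boundary behaviour of the \emph{single} function $\eta_1$ at the prime ends $0$ and $\infty$ of $\Omega$. The key algebraic step is to write $f_1(u)=\eta_1(u)/u$, which gives, for $\lambda\in\mathbb H$ and $z\in\mathbb H\cup(0,+\infty)$,
\[
\varphi_z(\lambda)=z\,f_1(zf_2(\lambda))=z\,\frac{\eta_1(zf_2(\lambda))}{zf_2(\lambda)}=\frac{\eta_1(zf_2(\lambda))}{f_2(\lambda)}.
\]
Here $f_2(\lambda)$ is a fixed nonzero number: taking imaginary parts in (\ref{eq:Nevanlinna in Omega-1}) yields $\Im f_2(\lambda)=\Im\lambda\int_{(0,+\infty)}\frac{1+t^2}{t\,|t-\lambda|^2}\,d\sigma_2(t)>0$ because $\sigma_2\neq0$, so $f_2(\lambda)\in\mathbb H$ and $\theta_0:=\arg f_2(\lambda)\in(0,\pi)$. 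Hence, as $z$ runs over $\mathbb H\cup(0,+\infty)$, the point $w:=zf_2(\lambda)$ remains in the closed sector $S=\{\rho e^{i\phi}:\rho>0,\ \theta_0\le\phi\le\pi+\theta_0\}$, which lies in $\Omega$ and is bounded away from $[0,+\infty)$. The resulting angular gap gives the elementary estimate $|t-w|\ge c\,(t+|w|)$ for all $t>0$ and $w\in S$, for some $c=c(\lambda)>0$. Both parts of the lemma thus reduce to computing $\lim_{w\to0,\,w\in S}\eta_1(w)$ and $\lim_{w\to\infty,\,w\in S}\eta_1(w)$ and dividing by $f_2(\lambda)$; it will be apparent that neither limit depends on the sector $S$.

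For the first assertion I would show $\lim_{w\to0,\,w\in S}\eta_1(w)=0$ from $\eta_1(w)=\beta_1 w+\int_{(0,+\infty)}\frac{w(1+t^2)}{t(t-w)}\,d\sigma_1(t)$. The term $\beta_1 w$ vanishes, and the sector estimate bounds the integrand by $\frac1c\,\frac{|w|}{t+|w|}\,\frac{1+t^2}{t}$. Splitting at $t=1$: on $(0,1]$ this is dominated by the $\sigma_1$-integrable function $2/t$ and tends to $0$ pointwise, so dominated convergence applies; on $[1,+\infty)$ one uses $\frac{|w|}{t+|w|}\le\frac{|w|}{t}$ to bound it by $2|w|$, whose integral is at most $2|w|\,\sigma_1([1,+\infty))\to0$. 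This uses exactly the two available hypotheses, namely $\int_{(0,+\infty)}(1/t)\,d\sigma_1<+\infty$ and the finiteness of $\sigma_1$. Hence $\varphi_z(\lambda)\to0$.

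For the second assertion I would first prove $f_1(w)\to\beta_1$ as $w\to\infty$ in $S$ by the same splitting applied to $f_1(w)-\beta_1=\int\frac{1+t^2}{t(t-w)}\,d\sigma_1$. Put $\gamma:=\lim_{w\to\infty,\,w\in S}\eta_1(w)$, a number depending only on $\eta_1$. If $\beta_1>0$ then $|\eta_1(w)|=|w|\,|f_1(w)|\to+\infty$, so $\gamma=\infty$. If $\beta_1=0$, write $\eta_1(w)=-\int\frac{1+t^2}{t}\,\frac{1}{1-t/w}\,d\sigma_1$: when $G:=\int\frac{1+t^2}{t}\,d\sigma_1$ is finite, dominated convergence gives $\gamma=-G\in(-\infty,0)$, and when $G=+\infty$ one gets $\gamma=\infty$. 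In every case $\gamma\in\overline{\mathbb H}$, and $\varphi_z(\lambda)\to\gamma/f_2(\lambda)$, which is the map $\varphi_\infty$.

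The main obstacle throughout is the weak integrability of $\sigma_j$: one is given only $\int(1/t)\,d\sigma_j<+\infty$ and finiteness, not $\int t\,d\sigma_j<+\infty$, so no single dominating function is available and a naive dominated-convergence argument breaks down near $t=\infty$. The sector estimate $|t-w|\ge c(t+|w|)$ together with the $t=1$ splitting is the device that overcomes this. The genuinely delicate point is the subcase $\beta_1=0$, $G=+\infty$ of the second limit, where $\gamma=\infty$ must be established even though the integrand no longer has an integrable majorant. Here I would argue that $\Re\frac{w}{w-t}\ge\frac12$ whenever $t\le|w|/4$ (a direction-independent bound coming from $|\frac{w}{w-t}-1|\le2|t/w|$ for $|t/w|\le\frac12$), so that $\Re(-\eta_1(w))\ge\frac12\int_{t\le|w|/4}\frac{1+t^2}{t}\,d\sigma_1\to+\infty$, while the complementary integral stays bounded; this forces $|\eta_1(w)|\to\infty$ and hence $\gamma=\infty$.
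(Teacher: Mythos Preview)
Your overall strategy coincides with the paper's: both arguments rewrite $\varphi_z(\lambda)=\eta_1(zf_2(\lambda))/f_2(\lambda)$ and reduce the lemma to computing $\lim_{w\to0}\eta_1(w)$ and $\lim_{w\to\infty}\eta_1(w)$ along sectors in $\Omega$. You supply considerably more detail than the paper, which simply asserts that these limits follow from the integral representation; your sector estimate and the $t=1$ splitting handle the limit at $0$ and the convergence $f_1(w)\to\beta_1$ correctly.

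There is, however, a genuine gap in the subcase $\beta_1=0$, $G=\int\frac{1+t^2}{t}\,d\sigma_1=\infty$. Note that $G=\infty$ is equivalent to $\int t\,d\sigma_1=\infty$, and then $\int_{t>R}\frac{1+t^2}{t}\,d\sigma_1=\infty$ for \emph{every} $R>0$. Consequently your ``complementary integral'' $\int_{t>|w|/4}\frac{1+t^2}{t}\,\Re\frac{w}{w-t}\,d\sigma_1$ is not bounded by the estimate you suggest: the bound $|\tfrac{w}{w-t}|\le 1/c$ together with $\int_{t>|w|/4}\frac{1+t^2}{t}\,d\sigma_1=\infty$ gives no control, and a finer bound of the form $|\Re\tfrac{w}{w-t}|\le\frac{|w|}{c^2t}$ on the region where $\Re\tfrac{w}{w-t}<0$ only yields a negative contribution of order $|w|\,\sigma_1((|w|,\infty))$, which in general (take $d\sigma_1=\frac{dt}{t\log^2 t}$ on $[e,\infty)$) grows faster than the main term $\frac12\int_{t\le|w|/4}\frac{1+t^2}{t}\,d\sigma_1$. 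So the inequality $\Re(-\eta_1(w))\to+\infty$ does not follow from your splitting. A clean way to close the gap is to observe that $\eta_1$ is an analytic self-map of the simply connected domain $\Omega$, that $\eta_1$ is increasing on $(-\infty,0)$ so that $\gamma=\lim_{x\to-\infty}\eta_1(x)$ exists in $[-\infty,0)$, and then to invoke Lindel\"of's theorem (after conformally mapping $\Omega$ to $\mathbb H$ via $\lambda\mapsto\sqrt\lambda$, under which your sector $S$ becomes a nontangential approach region at $\infty$) to conclude that $\eta_1(w)\to\gamma$ along $S$ as well.
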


\begin{proof}
We consider first the limit at $0$. Fix $\lambda\in\mathbb{H}$ and
suppose that $(z_{n})_{n\in\mathbb{N}}\subset\mathbb{H}\cup(0,+\infty)$
is a sequence converging to $0$. Suppose, in addition, that 
\[
\lim_{n\to\infty}\arg(z_{n})=\theta\in[0,\pi]
\]
exists. The points $w_{n}=z_{n}f_{2}(\lambda)$ tend to zero and their
arguments approximate $\theta+\arg f_{2}(\lambda)\in(0,2\pi)$. It
follows from (\ref{eq:Nevanlinna in Omega-1}) that $\lim_{n\to\infty}\eta_{1}(w_{n})=0$,
and this implies immediately that 
\[
\lim_{n\to\infty}\varphi_{z_{n}}(\lambda)=\lim_{n\to\infty}\frac{\eta_{1}(w_{n})}{f_{2}(\lambda)}=0.
\]
Compactness of the Riemann sphere then helps us to get rid of the
assumption on the arguments of $z_{n}$.

Consider next a sequence $(z_{n})_{n\in\mathbb{N}}\subset\mathbb{H}\cup(0,+\infty)$
that tends to infinity and $\lim_{n\to\infty}\arg(z_{n})=\theta\in[0,\pi]$
exists. Then the points $w_{n}=z_{n}f_{2}(\lambda)$ tend to infinity
and their arguments approximate $\theta+\arg f_{2}(\lambda)$. Now
(\ref{eq:Nevanlinna in Omega-1}) implies that $\lim_{n\to\infty}\eta_{1}(w_{n})=\lim_{x\downarrow-\infty}\eta_{1}(x)$.
If $\gamma=\lim_{x\downarrow-\infty}\eta_{1}(x)$ is infinite, we
conclude immediately that $\lim_{n\to\infty}\varphi_{z_{n}}(\lambda)=\infty$.
On the other hand, if $\gamma$ is finite, we have $\lim_{n\to\infty}\varphi_{z_{n}}(\lambda)=\gamma/f_{2}(\lambda)$.
\end{proof}
\begin{cor}
\label{cor: positive line}Suppose that $\eta_{1},\eta_{2}\in\mathcal{F}$
are as above, and let $\omega:\Omega\to\Omega$ be the analytic function
satisfying \emph{(\ref{eq:2})}. Then $\omega|\mathbb{H}$ has a continuous
extension to $\mathbb{H}\cup[0,+\infty]$. This extension satisfies
$\omega(0)=0$ and $\omega(\infty)=\infty$. 
\end{cor}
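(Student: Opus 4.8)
The plan is to mirror the proof of the earlier corollaries: realize $\omega$ as a Denjoy--Wolff point of a continuously varying family of self-maps and invoke Theorem \ref{thm:convergence theorem}, this time with the parameter space $\mathbb{H}\cup[0,+\infty]$ in place of $\mathbb{N}$. For $z\in\mathbb{H}\cup(0,+\infty)$ the map $\varphi_z(\lambda)=zf_1(zf_2(\lambda))$ is an analytic self-map of the relevant simply connected domain (conformally a disk, e.g.\ via $\lambda\mapsto\sqrt{\lambda}$), and equation (\ref{eq:2}) says precisely that $\omega(z)$ is a fixed point of $\varphi_z$. Since the $\mathcal{F}$-solution $\omega$ is produced as the limit of the iterates $\varphi_z(\varphi_z(\cdots\varphi_z(\cdot)\cdots))$, it is the attracting fixed point, so $\omega(z)=\lambda_{\varphi_z}$ for $z\in\mathbb{H}$. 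The first task is therefore to verify that $\varphi_z$ genuinely carries the domain into itself and that none of the maps $\varphi_z$, $z\in\mathbb{H}\cup[0,+\infty]$, is the identity; the latter is where the standing hypothesis that $\eta_1,\eta_2$ are not of the form $\beta\lambda$ enters, exactly as in Corollary \ref{cor:two variables-circle}.

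Next I would assemble the continuity input. For $z$ in the open set $\mathbb{H}\cup(0,+\infty)$ the map $z\mapsto\varphi_z(\lambda)$ is plainly continuous for each fixed $\lambda$; at the two endpoints the preceding lemma supplies the pointwise limits $\varphi_0\equiv 0$ and $\varphi_\infty=\gamma/f_2$ (with $\varphi_\infty\equiv\infty$ when $\gamma=\infty$). Thus $z\mapsto\varphi_z$ is pointwise continuous on all of $\mathbb{H}\cup[0,+\infty]$. Applying Theorem \ref{thm:convergence theorem} along an arbitrary sequence $z_n\to z_0$ (only $z_0\in[0,+\infty]$ needs attention, the interior continuity being clear) gives $\lambda_{\varphi_{z_n}}\to\lambda_{\varphi_{z_0}}$, provided $\varphi_{z_0}$ is not the identity. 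This produces a continuous extension of $\omega=\lambda_{\varphi_{(\cdot)}}$ from $\mathbb{H}$ to $\mathbb{H}\cup[0,+\infty]$, with $\omega(0)=\lambda_{\varphi_0}$ and $\omega(\infty)=\lambda_{\varphi_\infty}$.

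It remains to evaluate the two boundary Denjoy--Wolff points. The value at $0$ is immediate: $\varphi_0$ is the constant map with value the boundary point $0$, so $\lambda_{\varphi_0}=0$ and $\omega(0)=0$. The identity $\omega(\infty)=\infty$ is the substantive point, and the step I expect to be the main obstacle. When $\gamma=\infty$ it is again immediate, since $\varphi_\infty\equiv\infty$ and its Denjoy--Wolff point is the boundary point $\infty$. When $\gamma$ is finite, however, $\varphi_\infty=\gamma/f_2$ is a genuine nonconstant self-map, and one must show that its Denjoy--Wolff point is the prime end $\infty$ rather than a finite boundary point. Here I would transport $\varphi_\infty$ to the upper half-plane and argue as in Lemma \ref{mu in the circle}: write down the Nevanlinna representation of the transported map and show that the coefficient of its linear term is $\ge 1$ (equivalently, that the Julia--Carath\'eodory derivative at $\infty$ is $\le 1$), feeding in the representation (\ref{eq:Nevanlinna in Omega-1}) of $f_2$ and the value of $\gamma$. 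This imaginary-part estimate is the delicate part, and it is precisely the place where the interplay between $\gamma$ and the behaviour of $f_2$ at infinity must be used carefully.
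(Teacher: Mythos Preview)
Your plan has the right overall shape, but it skips the one genuinely new difficulty in this corollary. You assert that ``$\varphi_z(\lambda)=zf_1(zf_2(\lambda))$ is an analytic self-map of the relevant simply connected domain (conformally a disk, e.g.\ via $\lambda\mapsto\sqrt{\lambda}$)''. This is exactly the point that fails. For $\lambda\in\mathbb{H}$ one only knows $\arg f_2(\lambda)=\arg\eta_2(\lambda)-\arg\lambda\in[0,\pi)$, so for $z=re^{i\theta}\in\mathbb{H}$ the product $zf_2(\lambda)$ has argument in $[\theta,\theta+\pi)$, which in general exceeds $\pi$; thus $zf_2(\lambda)$ need not lie in $\mathbb{H}$, and $\varphi_z$ is \emph{not} a self-map of $\mathbb{H}$ (nor of $\Omega$, by the reflected argument). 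The paper flags this explicitly and repairs it by passing to the $z$-dependent sector $G_z=\{\lambda\in\mathbb{H}:\arg\lambda>\arg z\}$, on which one does have $zf_j(G_z)\subset G_z$ and hence $\varphi_z(G_z)\subset G_z$. One then conjugates by a conformal map $u_z:\mathbb{H}\to G_z$, namely $u_z(\lambda)=-(-\lambda)^{1-\theta/\pi}$, to obtain genuine self-maps $\psi_z=u_z^{-1}\circ\varphi_z\circ u_z$ of $\mathbb{H}$, and it is to the family $(\psi_z)$ that Theorem~\ref{thm:convergence theorem} is applied. Because the uniformizer $u_z$ itself varies with $z$, one must also check that $z\mapsto\psi_z(\lambda)$ is continuous and then recover $\omega(z)=u_z(\lambda_{\psi_z})$; this extra layer is absent from your outline.

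A secondary point: your proposed computation of $\omega(\infty)$ in the case $\gamma$ finite, via a Nevanlinna/Julia--Carath\'eodory estimate for $\gamma/f_2$, is speculative as written. You would first need to know on which domain $\gamma/f_2$ is a self-map (again not $\mathbb{H}$ in general), and the ``linear coefficient $\ge1$'' criterion is formulated for self-maps of $\mathbb{H}$ with Denjoy--Wolff point at $\infty$, so this step would also have to be routed through an appropriate conformal change of domain before any such estimate makes sense.
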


\begin{proof}
We use the family of maps 
\[
\varphi_{z}(\lambda)=zf_{1}(zf_{2}(\lambda)),\quad\lambda\in\mathbb{H},z\in\mathbb{H}\cup[0,+\infty].
\]
 considered in the preceding lemma. As in the preceding results, the
case in which one of these maps is the identity map\textemdash in
which case $z\in\mathbb{R}$\textemdash , is treated by direct calculation.
In general, a difficulty arises from the fact that $\varphi_{z}$
does not map $\mathbb{H}$ to itself. However, if we set 
\[
G_{z}=\{\lambda\in\mathbb{H}:\arg\lambda>\arg z\},
\]
we have $zf_{j}(\lambda)\in G_{z}$ for every $\lambda\in G_{z}$,
$j=1,2$, and therefore $\varphi_{z}(G_{z})\subset G_{z}$. Denote
by $u_{z}:\mathbb{H}\to G_{z}$ the conformal homeomorphism defined
by
\[
u_{z}(\lambda)=-(-\lambda)^{1-\frac{\theta}{\pi}},\lambda\in\mathbb{H},z=re^{i\theta}\in\mathbb{H}\cup(0,+\infty),
\]
where $\theta\in[0,\pi)$, $r>0,$ and the power is calculated using
the principal branch of the logarithm (that is, choosing $\arg(-\lambda)\in(-\pi,0)$
for $\lambda\in\mathbb{H})$. Then the map $\lambda\mapsto\psi_{z}(\lambda)=u_{z}(\varphi_{z}(u_{z}^{-1}(\lambda)))$,
sends $\mathbb{H}$ to $\overline{\mathbb{H}}$ and depends continuously
on $z$. Theorem \ref{thm:convergence theorem} shows that the map
$z\mapsto\lambda_{\psi_{z}}$ is continuous on $\mathbb{H}\cup[0,+\infty]$.
The corollary follows easily because $\omega(z)=u_{z}(\lambda_{\psi_{z}})$
for $z\in\mathbb{H}$, so $\omega(z)$ tends to $u_{r}(\lambda_{\psi_{z}})=\lambda_{\psi_{z}}$
as $\lambda\in\mathbb{H}$ tends to $r>0$.
\end{proof}

\end{document}